\newcommand{\Cl}{{\rm Cl}}
\newtheorem{lemma1}{}[section]
\newenvironment{lemma}{\begin{lemma1}{\bf Lemma.}}{\end{lemma1}}
\newenvironment{theorem}{\begin{lemma1}{\bf Theorem.}}{\end{lemma1}}
\newenvironment{corollary}{\begin{lemma1}{\bf Corollary.}}{\end{lemma1}}
\newenvironment{remark}{\begin{lemma1}{\bf Remark.}\rm}{\end{lemma1}}
\newenvironment{definition}{\begin{lemma1}{\bf Definition.}}{\end{lemma1}}
\newenvironment{remark*}{{\bf Remark.}}{}
\newenvironment{example*}{{\bf Example.}}{}
\newcommand{\R}{\ensuremath{\mathbb{R}}}
\newcommand{\Q}{\ensuremath{\mathbb{Q}}}
\newcommand{\Z}{\ensuremath{\mathbb{Z}}}
\newcommand{\N}{\ensuremath{\mathbb{N}}}
\newcommand{\PP}{\ensuremath{\mathbb{P}}}
\newcommand{\merom}[3]{\ensuremath{#1\colon #2 \dashrightarrow #3}}
\newcommand{\holom}[3]{\ensuremath{#1\colon #2  \rightarrow #3}}
\newcommand{\fibre}[2]{\ensuremath{#1^{-1} (#2)}}
\newcommand\sF{{\mathcal F}}
\newcommand\sG{{\mathcal G}}
\newcommand\sO{{\mathcal O}}
\newcommand{\pic}[0]{\operatorname{Pic}}
\newcommand{\To}{\longrightarrow}
\newcommand{\NE}[1]{ \overline { \mathrm{NE} }(#1)}
\title{Projectivity criteria for K\"ahler morphisms}
\date{\today}
\subjclass[2020]{32J25, 32Q15, 14E30}
\keywords{K\"ahler morphisms, projectivity criteria,  MMP for projective morphisms}
\author{Beno\^it Claudon}
\author{Andreas H\"oring}
\address{Beno\^it Claudon\\Univ Rennes, CNRS, IRMAR - UMR 6625, F-35000 Rennes, France et Institut Universitaire de France}
\email{Benoit.Claudon@univ-rennes.fr}
\address{Andreas H\"oring\\Universit\'e C\^ote d'Azur, CNRS, LJAD, France}
\email{Andreas.Hoering@univ-cotedazur.fr}
\begin{document}

\begin{abstract} 
In this short note we prove two projectivity criteria for fibrations between mildly singular compact K\"ahler spaces. They are the relative versions of the celebrated criteria of Kodaira and Moishezon. As an application we obtain that the MRC fibration
always has a model that is a projective morphism.
\end{abstract}

\maketitle

\vspace{-0.5cm}

\section{Introduction}

Given a compact K\"ahler manifold $X$, one of the most basic questions is to decide
whether $X$ is (the analytification of) a projective manifold. There are two well-known cases where this always holds:

\begin{itemize}
\item Kodaira's criterion: $X$ is projective if $H^0(X, \Omega_X^2)=0$.
\item Moishezon's theorem: $X$ is projective if there exists a line bundle 
$L \rightarrow X$ that is big.
\end{itemize}

In view of the recent progress on the MMP for projective morphisms \cite{Fuj22a, DHP24}, it is interesting to find sufficient conditions for a K\"ahler morphism to be projective. 
In this short note we generalise the classical criteria to the relative situation. Our first result is: 

\begin{theorem}\label{th:projectivity criterion}
Let $f\colon X\to Y$ be a fibration between compact Kähler manifolds.
Assume one of the following:
\begin{itemize}
\item The natural  map
$f^*:H^0(Y,\Omega^2_Y)\To H^0(X,\Omega^2_X)$
is an isomorphism.
\item The morphism $f$ is Moishezon, i.e. there exists a line bundle $L \rightarrow X$
that is $f$-big.
\end{itemize}
Then $f$ is a projective morphism.
\end{theorem}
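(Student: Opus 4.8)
The plan is to deduce both statements from the construction of an $f$-ample line bundle on $X$: by Grauert's criterion a line bundle that restricts to an ample bundle on every fibre of a proper morphism is $f$-ample, and an $f$-ample bundle makes $f$ projective. So in each case I would try to produce an integral class of type $(1,1)$ on $X$ that restricts to a positive class on the fibres.

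\textbf{The $(2,0)$-criterion.} Fix a Kähler class on $X$ and a smooth fibre $X_y$, and consider the restriction $r\colon H^2(X,\Q)\to H^2(X_y,\Q)$, with image $V:=\operatorname{im}(r)$. Since $X$ and $X_y$ are compact Kähler, $H^2(X,\Q)$ and $H^2(X_y,\Q)$ carry polarizable rational Hodge structures and $r$ is a morphism of such, so $V$ is a polarizable sub-Hodge structure of $H^2(X_y,\Q)$. The first hypothesis gives $H^0(X,\Omega^2_X)=f^*H^0(Y,\Omega^2_Y)$, and the pull-back of a holomorphic $2$-form under the constant map $X_y\to\{y\}$ vanishes; hence the $(2,0)$-part $V^{2,0}=r_{\C}\big(H^{2,0}(X)\big)$ is zero and $V$ is of pure type $(1,1)$. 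Now comes the key point: because polarizable rational Hodge structures form a semisimple category, the surjection $H^2(X,\Q)\twoheadrightarrow V$ admits a section that is a morphism of Hodge structures; as $V$ is pure of type $(1,1)$ this section lands in the rational $(1,1)$-classes, so $r\big(\mathrm{NS}(X)_{\Q}\big)=V$. The relative Kähler class restricts to a class in the non-empty open cone $V\cap\mathcal{K}_{X_y}$, which therefore meets $r(\mathrm{NS}(X)_{\Q})$; lifting such a class to $\mathrm{NS}(X)_{\Q}$ and clearing denominators yields $\eta=c_1(L)$ with $L\in\pic(X)$ whose restriction to $X_y$ is ample. By openness of the Kähler cone and properness of $f$ the same holds on every fibre, so $L$ is $f$-ample and $f$ is projective.

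\textbf{Main obstacle for the first criterion.} The delicate step is exactly this descent from fibrewise data to an honest integral class on the total space: a rational class need not have rational Hodge components, so a real $(1,1)$-representative of a fibre class need not lift to a rational class on $X$. This is precisely what the purity of $V$ (forced by the hypothesis) together with the semisimplicity of polarizable Hodge structures is meant to repair. I would also have to check carefully that being relatively Kähler is an open, fibrewise-detected condition, and to invoke Grauert to pass from fibrewise ampleness to $f$-ampleness over a possibly non-reduced or singular base.

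\textbf{The Moishezon criterion.} Given an $f$-big line bundle $L$, I would form the relative Iitaka fibration $X\dashrightarrow Z:=\proj_Y\bigoplus_{m\ge 0} f_*L^{\otimes m}$, which is bimeromorphic over $Y$ precisely because $L$ is $f$-big, with $Z\to Y$ projective. Resolving its indeterminacy by blow-ups with smooth centres, which preserve the Kähler property, should produce a compact Kähler manifold $\tilde X$, a bimeromorphic morphism $\tilde X\to X$ over $Y$, and a projective morphism $\tilde X\to Y$. It then remains to descend projectivity along $\tilde X\to X$, that is, to establish the relative form of Moishezon's theorem: a Kähler morphism that is bimeromorphic over $Y$ to a projective morphism is itself projective. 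I expect this descent to be the main obstacle here, since—unlike in the first criterion—the Hodge structure $V$ need no longer be of pure type $(1,1)$, so the semisimplicity argument does not apply directly, and one must instead transport relative positivity across the modification using the Kähler assumption on $X$.
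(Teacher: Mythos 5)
Your route via the purity of $V=\mathrm{im}\bigl(H^2(X,\Q)\to H^2(X_y,\Q)\bigr)$ and the semisimplicity of polarizable rational Hodge structures is genuinely different from the paper's argument, which takes a rational class $\eta$ close to a K\"ahler class and subtracts the \emph{explicitly rational} pullback $f^*\bigl(f_*(\eta^{d+1})\bigr)/\bigl((d+1)f_*(\eta^d)\bigr)$ to kill the $(2,0)+(0,2)$ part. Your splitting does produce $L\in\pic(X)$ with $L|_{X_y}$ ample for the one chosen smooth fibre, but the concluding sentence --- that openness of the K\"ahler cone and properness of $f$ then give ampleness on \emph{every} fibre --- is a non sequitur: openness only gives relative ampleness over a neighbourhood of $y$, and the locus where $L$ is fibrewise ample need not be closed (a K\"ahler class can degenerate to a merely nef one along a family, and singular fibres are not covered by your Hodge-theoretic setup at all). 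The class $s(v)$ is controlled on $X_y$ only. To repair this you would have to take $v=r(\eta)$ for a global rational class $\eta$ near a K\"ahler class $\omega$, note that $s(r(\eta))-\eta$ lies in the kernel of restriction to every smooth fibre because global classes restrict to flat sections of $R^2f_*\Q$, and then still establish uniform fibrewise positivity over the compact base, including at singular fibres. The paper sidesteps all of this because its correction term is a pullback from $Y$: the corrected class restricts to each fibre exactly as the globally perturbed $(1,1)$-part $\beta$ does, and relative ampleness then follows from Lemma~\ref{lem:relative positivity}.

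\textbf{On the Moishezon criterion.} The reduction to a bimeromorphic modification $\tilde X\to X$ with $\tilde X\to Y$ projective is correct in spirit (the paper uses Hironaka's flattening/Chow's lemma as in Lemma~\ref{lemma-moishezon-projective}; note also that $\bigoplus_{m\ge0}f_*L^{\otimes m}$ is not known to be finitely generated, so the relative Iitaka construction needs care). But the step you defer --- descending projectivity along $\tilde X\to X$ --- is the entire mathematical content of this half of the theorem, and you offer no mechanism for it; a K\"ahler morphism that is bimeromorphic over $Y$ to a projective one is not formally projective. The paper's proof factors $\tilde X\to X$ into a relative MMP over $X$ for a klt pair whose boundary is supported on the exceptional divisors (Lemma~\ref{lemma-factorise}) and shows each step preserves projectivity over $Y$: the ray contracted over $X$ remains extremal in $\overline{\mathrm{NE}}(X_0/Y)$ because $\overline{\mathrm{NE}}(Z)$ contains no lines for $Z$ K\"ahler, the corresponding contraction over $Y$ exists and is projective by Theorem~\ref{theorem-MMP}, the rigidity lemma identifies the two contractions, and flips are handled because $\varphi^+$ is polarised by $K_{X_1}$ and compositions of projective morphisms over a compact base are projective. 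Without this (or some substitute descent argument) the second part of your proposal is not a proof.
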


The first assumption gives a global version of \cite[Cor.1.2]{Bin83} \cite[Prop.3.3.1]{Nak02b} and is verified by every fibration that is cohomologically constant (cf. \cite{DS21} for the terminology). In particular it holds for any smooth model of the MRC fibration of a compact K\"ahler manifold, see below. 
The second statement improves \cite[Thm.10.1]{CP00} where it was shown that $f$ is locally projective. 
In the case where $Y$ is a point Boucksom's version  of Moishezon's theorem \cite[Thm.1.2.13]{Bou02} gives a more precise result: if $X$ is K\"ahler and Moishezon, then the projection of a K\"ahler class on the real Neron-Severi space remains K\"ahler. It is not clear to us if this still holds in the relative setting. 

The proofs of the two statements are quite different. For the first case we use the Hodge decomposition to construct a relatively ample line bundle, whereas in the second case we use a relative MMP  to show that the projectiveness of the fibration is preserved under suitably chosen bimeromorphic modifications. The latter technique also works for mildly singular spaces, allowing us to prove a variant of the main statement that is suitable for applications in the MMP, cf. Theorem~\ref{theorem-singular}. 
In particular we obtain:

\begin{theorem} \label{theorem-MRC}
Let $X$ be a normal compact K\"ahler space with klt singularities.
Then there exists a model of the MRC fibration that is a projective morphism.
\end{theorem}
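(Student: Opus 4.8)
The plan is to construct the MRC fibration as a projective morphism by combining the rationally connected structure of its fibers with the relative Moishezon criterion from Theorem~\ref{th:projectivity criterion} (and its singular variant alluded to in Theorem~\ref{theorem-singular}). The key observation is that the MRC fibration $X \dashrightarrow Z$ has rationally connected general fibers, and rational connectedness forces cohomological constancy: on a rationally connected variety $F$ one has $H^0(F, \Omega_F^p) = 0$ for all $p > 0$, so the first hypothesis of Theorem~\ref{th:projectivity criterion} is the natural thing to verify.

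First I would resolve the indeterminacy of the MRC fibration to obtain an honest morphism. So I would start from a resolution of singularities, replacing $X$ by a smooth model $\upX$ together with a morphism $\upX \to X$, and then take a smooth model of the MRC fibration $f \colon \upX \to \upZ$, where $\upZ$ is a smooth compact Kähler space and the general fiber of $f$ is rationally connected. The goal is now to show that $f$ is a projective morphism, since a projective morphism on the smooth model descends to give the desired model of the MRC fibration on (the klt space) $X$.

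Next I would verify the first hypothesis of Theorem~\ref{th:projectivity criterion}, namely that $f^* \colon H^0(\upZ, \Omega_{\upZ}^2) \To H^0(\upX, \Omega_{\upX}^2)$ is an isomorphism. Injectivity is automatic for a fibration. For surjectivity, the point is that holomorphic $2$-forms on $\upX$ cannot have any component involving the relative directions: restricting a holomorphic $2$-form to a general fiber $F$ lands in $H^0(F, \Omega_F^2) = 0$ because $F$ is rationally connected, and more refined vanishing (using that $h^0(F, \Omega_F^1) = 0$ as well) shows that the mixed terms also vanish, so every holomorphic $2$-form is pulled back from the base. This is precisely the cohomological constancy of the MRC fibration referred to in the excerpt; I would cite \cite{DS21} for the general statement that the MRC fibration is cohomologically constant, which yields the isomorphism directly.

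The main obstacle I expect is bookkeeping around singularities and the bimeromorphic freedom in the word ``model.'' Theorem~\ref{th:projectivity criterion} is stated for fibrations between compact Kähler \emph{manifolds}, so working on a smooth model $\upX$ is essential, but then I must argue that projectivity of $f \colon \upX \to \upZ$ gives a projective \emph{model of the MRC fibration of $X$} in the klt category — this is where I would invoke Theorem~\ref{theorem-singular}, the singular variant designed for MMP applications, rather than the smooth Theorem~\ref{th:projectivity criterion}. The subtlety is ensuring that the rationally connected (hence cohomologically constant) structure is preserved under the chosen resolutions and that the base $\upZ$ can be taken Kähler; once the hypothesis $f^* H^0(\Omega^2) \cong H^0(\Omega^2)$ is verified on a smooth Kähler model, the projectivity conclusion is immediate from the first case of Theorem~\ref{th:projectivity criterion}.
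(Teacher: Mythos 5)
Your proposal is correct and follows essentially the same route as the paper: pass to a smooth K\"ahler model of the MRC fibration, use the vanishing of cohomology on the rationally connected general fibres to verify the $H^0(\Omega^2)$-hypothesis, and conclude by the first case of the projectivity criterion. The only cosmetic differences are that the paper checks the hypothesis via $R^1f_*\sO_X=R^2f_*\sO_X=0$ (Theorem~\ref{kollar-takegoshi}, Leray and Hodge duality, packaged as Corollary~\ref{cor:R1=R2=0}) rather than arguing directly on holomorphic $2$-forms, and that it explicitly invokes \cite{HM07} to pass from rational chain connectedness of the klt fibres to rational connectedness of their resolutions.
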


This answer a question asked to us by Juanyong Wang.
In fact our proof shows that every holomorphic model $X' \rightarrow Y'$ with $X'$ and $Y'$ smooth is a projective morphism, cf. also Remark~\ref{remark-MRC-final}.

In Section \ref{section-applications} we give some applications of our main results, for example to bimeromorphic morphism between strongly $\Q$-factorial klt spaces.

\subsection*{Acknowledgements} We are very grateful to C. Voisin for a discussion that lead us to the proof
of the first case of Theorem~\ref{th:projectivity criterion}. We thank
F.~Campana, O.~Fujino, J.~Koll\'ar and T.~Peternell for some very helpful communications.

Both authors would like to thank Institut Universitaire de France for providing excellent working conditions. BC benefits from the support of the French government ``Investissements d’Avenir'' program integrated to France 2030, bearing the following reference ANR-11-LABX-0020-01.

\section{Basic facts and notation}

All complex spaces are supposed to be separated and of finite dimension,  
a complex manifold is a smooth irreducible complex space.
An analytic variety is a complex space that is irreducible and reduced.
A fibration is a proper surjective morphism with connected fibres between complex spaces.
We refer to \cite{Gra62, Fuj78, Dem85} for basic definitions about $(p,q)$-forms
and K\"ahler forms in the singular case.  

We use the standard terminology of the MMP as explained in \cite{KM98, Deb01},
cf. also \cite{Nak87} for foundational material in the case of projective morphisms.

\subsection{Relatively ample line bundles}\label{subs:proj morphisms}

Let $X$ be a normal compact complex space with at most rational singularities.
Suppose that $X$ is in the Fujiki class, i.e. $X$ is bimeromorphic to a compact K\"ahler manifold. 
A $(1,1)$-class on $X$ is an element of $N^1(X):=H^{1,1}_{\rm BC}(X)$,
the Bott-Chern group of $(1,1)$-currents that are locally $\partial \bar \partial$-exact (cf. \cite[Sect.2]{HP16} for details). 
Even if $X$ is projective, the inclusion 
$$
\mbox{NS}(X) \otimes \R \subset N^1(X)
$$
is typically not an equality. However if $H^2(X, \sO_X)=0$, we can apply Kodaira's criterion to a desingularisation
to see that $X$ is projective and every $(1,1)$-class is an $\R$-divisor class.

\begin{definition}
Let $\holom{f}{X}{Y}$ be a fibration between analytic varieties. The fibration $f$ is projective (resp. Moishezon) if there exists a line bundle $L \rightarrow X$ that
is relatively ample (resp. relatively big).

The fibration $f$ is locally projective (resp. locally Moishezon), if there exists a covering of $Y$ by open sets $U_i$ such that $\fibre{f}{U_i} \rightarrow U_i$ is projective (resp. Moishezon).
\end{definition}

\begin{remark} \label{remark-moishezon}
In the literature, e.g. \cite[VII, \S 6]{EMS7} a compact analytic variety is said to be Moishezon if its dimension is equal to its algebraic dimension, i.e. the transcendence degree of its field of meromorphic functions. If $X$ is smooth this is equivalent to the existence of a big line bundle \cite[VII, \S 6.3]{EMS7}.

Our definition of projective, resp. Moishezon morphism is more general than
in parts of the literature, e.g. \cite{Kol22}.
\end{remark}

\begin{remark} \label{remark-compose-projective}
The composition of two projective morphisms is in general not projective, but 
it is straightforward to show that this holds if the base of the fibrations are compact.
\end{remark}

The relative ampleness of a line bundle can be read off positivity properties of its Chern class:

\begin{lemma}\label{lem:relative positivity}
Let $f\colon X\to Y$ be a fibration between compact K\"ahler manifolds. A line bundle $L\in\pic(X)$ is $f$-ample if and only if there exists $[\omega_Y] \in H^2(Y,\R)$ a K\"ahler class such that $c_1(L)+f^*(\omega_Y)$  is a K\"ahler class on $X$.
\end{lemma}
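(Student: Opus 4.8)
The plan is to prove the two implications separately, using for the easy direction the fiberwise characterization of relative ampleness, and for the converse a relative positivity construction followed by a compactness estimate.

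\emph{The condition is sufficient.} Suppose $[\omega_Y]\in H^2(Y,\R)$ is a Kähler class such that $c_1(L)+f^*[\omega_Y]$ is represented by a Kähler form $\omega$ on $X$. For $y\in Y$ write $X_y=f^{-1}(y)$. Since $f$ is constant on $X_y$, the restriction $f^*\omega_Y|_{X_y}$ vanishes, so $\omega|_{X_y}$ represents $c_1(L|_{X_y})$ and is a Kähler form on the compact complex space $X_y$. By Kodaira's embedding theorem (when $X_y$ is smooth) and Grauert's ampleness criterion in general, $L|_{X_y}$ is ample. As $f$ is proper, fiberwise ampleness implies relative ampleness, so $L$ is $f$-ample.

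\emph{The condition is necessary.} Assume $L$ is $f$-ample. First I would construct a smooth Hermitian metric $h$ on $L$ whose curvature form $\eta$ (representing $c_1(L)$) is positive definite on $\ker(df_x)$ for every $x\in X$. For this, use relative Serre vanishing: for $m\gg 0$ the map $f^*f_*L^m\to L^m$ is surjective, so over a locally finite cover $\{U_\alpha\}$ of $Y$ on which $L^m$ is generated by finitely many sections coming from $f_*L^m$, one obtains morphisms $\phi_\alpha\colon f^{-1}(U_\alpha)\to\PP^{N_\alpha}$ with $\phi_\alpha^*\o(1)=L^m$ such that $(\phi_\alpha,f)$ is an immersion (a closed embedding for $m$ large). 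Pulling back the Fubini--Study form gives metrics $h_\alpha$ on $L^m$ with semipositive curvature $\phi_\alpha^*\omega_{FS}$; since $(\phi_\alpha,f)$ has injective differential and $f$ kills $\ker(df_x)$, the differential of $\phi_\alpha$ is injective on $\ker(df_x)$, so $\phi_\alpha^*\omega_{FS}$ is positive there. Patching the $h_\alpha$ with a partition of unity $\{\rho_\alpha\}$ pulled back from $Y$, the correction terms all involve $\partial\rho_\alpha$, $\bar\partial\rho_\alpha$ or $i\partial\bar\partial\rho_\alpha$, i.e. pullbacks from $Y$, and these vanish on $\ker(df_x)$; hence the resulting global curvature restricted to $\ker(df_x)$ is a convex combination of the forms $\phi_\alpha^*\omega_{FS}|_{\ker(df_x)}$ and is positive. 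Taking an $m$-th root yields the desired metric on $L$.

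To conclude, fix a Kähler form $\omega_Y$ on $Y$. At each $x\in X$ one has $\ker(f^*\omega_Y)_x=\ker(df_x)$, and $\eta$ is positive on this kernel. A pointwise linear-algebra argument shows that if a Hermitian form is positive on the kernel of a semipositive form, then their sum is positive for a large enough coefficient; by compactness of $X$ there is $t_0>0$ with $\eta+t\,f^*\omega_Y>0$ on all of $X$ for $t\ge t_0$. Then $\eta+t_0 f^*\omega_Y$ is a Kähler form representing $c_1(L)+f^*(t_0[\omega_Y])$, and $t_0[\omega_Y]$ is a Kähler class on $Y$, which proves the claim.

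The main obstacle is the construction of the metric $h$ in the necessary direction, and specifically guaranteeing positivity of its curvature along $\ker(df_x)$ at points lying over singular fibers, where $\ker(df_x)$ is strictly larger than the tangent space of a smooth fiber. This is exactly where one exploits that the relative embedding $(\phi_\alpha,f)$ remains an immersion over such points and respects the fibration; the uniform choice of $t_0$ is then a routine consequence of the compactness of $X$.
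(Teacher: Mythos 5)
Your argument for the implication ``$c_1(L)+f^*[\omega_Y]$ K\"ahler $\Rightarrow$ $L$ is $f$-ample'' has a genuine gap at the step ``as $f$ is proper, fiberwise ampleness implies relative ampleness''. For proper morphisms of noetherian schemes this is EGA~III, 4.7.1, but in the analytic category it is not a formal consequence of properness: one must show that a line bundle which is ample on every fibre admits, locally over $Y$, a uniform power whose relative sections give an embedding into $U\times\PP^N$, and this openness/uniformity statement is essentially the entire analytic content of the direction you are proving. You give neither an argument nor a reference, and the paper itself remarks that it could not locate the lemma in the literature, pointing only to ``similar considerations'' in [FS90, Sect.~1]. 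Note also that your reduction discards most of the hypothesis: you only use that $c_1(L)$ restricts to a K\"ahler class on each fibre, which is strictly weaker pointwise information than the existence of a global representative of $c_1(L)+f^*[\omega_Y]$ that is positive on all of $X$ --- in particular positive on the subspaces $\ker(df_x)$ over singular fibres, where fibrewise K\"ahlerness of the restricted class says nothing. The paper exploits the global form directly: over a Stein open $U\subset Y$ with $\omega_Y|_U=i\partial\overline{\partial}\varphi_U$ it twists a metric on $L$ by $e^{-\varphi_U\circ f}$ to obtain a line bundle with positive curvature on the weakly pseudoconvex manifold $X_U=f^{-1}(U)$, and Demailly's $L^2$ estimates then produce sections of $L^{\otimes m}$ over $X_U$ separating points and tangent vectors; compactness of $Y$ yields a uniform $m$. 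You should either adopt this route or supply a proof (or a precise reference) of the fibrewise-to-relative implication for proper analytic morphisms.

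The other direction of your proof (relative ampleness gives a curvature form positive on $\ker(df_x)$, then add $t_0\, f^*\omega_Y$ and use compactness) is correct; it amounts to a self-contained proof of [Fuj78, Lemma~4.4], which the paper simply cites, and your handling of the partition-of-unity correction terms and of the points over singular fibres via the immersion $(\phi_\alpha,f)$ is the right way to do it.
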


We have not been able to locate this statement in the literature, cp. \cite[Sect.1]{FS90} for similar considerations. We thus describe a sketch of proof for the reader's convenience.

\begin{proof}[Sketch of proof]
If $L\in\pic(X)$ is $f$-ample, we can just apply \cite[Lemma 4.4]{Fuj78}.

Conversely, assume that  for some smooth metric $h$ on $L$ and
some K\"ahler form $\omega_Y$ on $Y$, the $c_1(L,h)+f^*(\omega_Y)$ form is Kähler.  We can then consider $U\subset Y$ a Stein open subset where $\left(\omega_Y\right)|_U=i\partial\overline{\partial} \varphi_U$ for some smooth strictly psh function $\varphi_U\colon U\to \R$. Over $X_U:=f^{-1}(U)$, the line bundle $L$ can be endowed with the metric $he^{-\varphi_U\circ f}$ that has positive curvature. Being proper over a Stein manifold, $X_U$ is holomorphically convex hence weakly pseudoconvex. We can then resort to the usual $L^2$ estimates over $X_U$ for the Hermitian line bundle $(L,he^{-\varphi_U\circ f})$ \cite{Dem82} and produce sections of $L^{\otimes m}$ defined over $X_U$ which separate points and tangent vectors (for some integer $m$ depending on $U$). The manifold $Y$ being compact, we cover it with finitely many open subsets as above and we then choose an $m$ uniformly such that $L^{\otimes m}$ is $f$-very ample.
\end{proof}

We recall the following application of Chow's lemma:

\begin{lemma} \label{lemma-moishezon-projective}
Let $\holom{f}{X}{Y}$ be a Moishezon fibration between compact analytic varieties.
Then there exists a projective log-resolution $\holom{\mu_0}{X_0}{X}$ such that
the composition $f \circ \mu_0$ is a projective morphism.
\end{lemma}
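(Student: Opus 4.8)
The plan is to use the relatively big line bundle to realise $X$, after a bimeromorphic modification, as a subspace of a space that is projective over $Y$, and then feed the resulting bimeromorphic morphism into Chow's lemma.

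First I would fix an $f$-big line bundle $L$ on $X$. By Grauert's direct image theorem \cite{Gra62} the sheaves $\sF_m := f_*(L^{\otimes m})$ are coherent, so $\pi_m\colon \PP(\sF_m)\to Y$ is a projective morphism. The relative evaluation of sections defines a meromorphic map $\phi_m\colon X\dashrightarrow \PP(\sF_m)$ over $Y$, and since $L$ is $f$-big the relative Iitaka dimension equals $\dim X-\dim Y$; hence for $m\gg 0$ the map $\phi_m$ is bimeromorphic onto its image $Z:=\overline{\phi_m(X)}$. As $Z$ is a closed analytic subset of $\PP(\sF_m)$, the induced morphism $Z\to Y$ is projective, and $Z$ is compact.

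Next I would resolve the indeterminacy of $\phi_m$. Writing $\Gamma\subseteq X\times_Y Z$ for the closure of the graph, the first projection $p\colon \Gamma\to X$ is bimeromorphic and projective, being the restriction to the closed subspace $\Gamma$ of the base change $X\times_Y Z\to X$ of the projective morphism $Z\to Y$; the second projection $q\colon \Gamma\to Z$ is bimeromorphic. I would then choose, by Hironaka, a projective log-resolution $\mu_0\colon X_0\to X$ that factors through $\Gamma$. Composing with $p$ keeps $\mu_0$ projective, and the induced map $X_0\to Z$ remains bimeromorphic.

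The conclusion would then follow once the bimeromorphic morphism $X_0\to Z$ is known to be projective: for then $f\circ\mu_0$ factors as $X_0\to Z\to Y$, a composition of two projective morphisms with compact bases $Z$ and $Y$, hence projective by Remark~\ref{remark-compose-projective}. The main obstacle is precisely this projectivity of $X_0\to Z$. Although $Z\to Y$ is projective, $f$ itself is not, so the asymmetry means the morphism to $Z$ is not projective for free: there may be curves contracted by $X_0\to Z$ but not by $\mu_0$, on which a $\mu_0$-ample class gives no positivity. I would resolve this by applying the analytic form of Chow's lemma to the bimeromorphic morphism $q\colon \Gamma\to Z$ (a bimeromorphic morphism is trivially Moishezon over its target), which yields a projective modification of $\Gamma$ over $Z$; arranging this modification to be dominated by $X_0$ while keeping $\mu_0$ a log-resolution of $X$ then makes $X_0\to Z$ projective and completes the argument.
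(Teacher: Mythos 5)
Your overall strategy coincides with the paper's, with the citations unpacked: the paper obtains a log-resolution with $f\circ\mu_0$ projective by quoting \cite[Lem.2.18]{DH20} (whose content is essentially your construction of $Z$ via the relative Iitaka map of the $f$-big bundle and the graph $\Gamma\subset X\times_Y Z$), and then makes $\mu_0$ itself projective by one application of the analytic Chow lemma \cite[Cor.2]{Hir75}. Your version is self-contained, which is a virtue; the price is that you must juggle the two projectivity requirements yourself, and that is where the write-up has a loose end.

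Concretely: in the middle of the argument you justify the projectivity of $\mu_0$ by noting that $X_0\to\Gamma$ is a composition of blow-ups, that $p\colon\Gamma\to X$ is projective as a closed subspace of the base change $X\times_Y Z\to X$, and that compositions of projective morphisms over compact bases are projective. But in the final step you replace $\Gamma$ by the Chow modification $\Gamma'$ of $q\colon\Gamma\to Z$ and let $X_0$ dominate $\Gamma'$ instead. The chain to $X$ is now $X_0\to\Gamma'\to\Gamma\to X$, and the middle arrow $\Gamma'\to\Gamma$ is not obviously projective: Hironaka's corollary produces $\Gamma'$ as a blow-up of $Z$, not of $\Gamma$. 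So you have secured the projectivity of $X_0\to Z$ (hence of $f\circ\mu_0$) but lost your argument for the projectivity of $\mu_0$. Two repairs: (i) blow up on $\Gamma$ the inverse image of the ideal on $Z$ furnished by Chow's lemma, so that the intermediate space is projective over $\Gamma$ as well as over $Z$; or, cleanest, (ii) observe that once $f\circ\mu_0$ is projective, $\mu_0$ is automatically projective, because a line bundle that is relatively ample for $X_0\to Y$ is relatively ample for $X_0\to X$ (fibres of $\mu_0$ sit inside fibres of $f\circ\mu_0$, and the local embeddings of $(f\circ\mu_0)^{-1}(U)$ into $\PP^N\times U$ restrict to embeddings over the open set $f^{-1}(U)\subset X$). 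A second, smaller imprecision: maximal relative Iitaka dimension only gives directly that $\phi_m$ is generically finite onto its image; that it is bimeromorphic for $m\gg 0$ is the standard consequence of bigness via Kodaira's lemma on the general fibre, and is exactly what \cite[Lem.2.18]{DH20} proves.
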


\begin{proof}
It is shown in \cite[Lem.2.18]{DH20} that we can find a log-resolution $\mu_0$
such that $f \circ \mu_0$ is a projective morphism. 
The property that $\mu_0$ is also projective is not claimed in \cite{DH20}, but can 
be achieved by applying the analytic version of Chow's lemma \cite[Cor.2]{Hir75} to the bimeromorphic map $\mu_0$.
\end{proof}

\subsection{Preliminaries on MMP}

A normal projective variety $X$ is $\Q$-factorial if every Weil divisor is $\Q$-Cartier, or more formally the natural map $\pic(X) \rightarrow \Cl(X)$ induces an isomorphism
$$
\pic(X) \otimes \Q \To \Cl(X) \otimes \Q.
$$
Following \cite{DH20} we extend the definition to the K\"ahler setting:

\begin{definition}
Let $X$ be a normal complex space. We denote by $W(X)$ the group of divisorial sheaves, i.e. the group of isomorphism classes of reflexive sheaves of rank one, endowed with the group operation $\sF \circ \sG := (\sF \otimes \sG)^{**}$.
\end{definition}

\begin{remark}
We have a natural inclusion 
$$
\left\{\begin{array}{rcl} \Cl(X) &\lhook\joinrel\To &W(X)\\
D & \longmapsto & \sO_X(D).
\end{array}\right.
$$
If $X$ is not the projective this inclusion is not necessarily surjective: let $S$ be a K3 surface that is very general
in its deformation space, so $\pic(S) = 0$. Let $X=\PP(\Omega_S)$, then 
we have $\pic(X) \simeq \Z K_X \neq 0$. However $X$ does not contain any divisor, cf. \cite[Cor.3.5]{AH21}, so $\Cl(X)=0$.
\end{remark}

\begin{definition} \label{definition-strongly-factorial} \cite[Defn.2.2(ix)]{DH20} 
A normal complex space  $X$ is strongly $\Q$-factorial if every divisorial sheaf
is $\Q$-Cartier, i.e. for every $\sF \in W(X)$ there exists $m \in \N$ such that
$\sF^{[m]}$ is locally free. 
\end{definition}

\begin{remark}
Since a line bundle defines a locally free sheaf, we have an inclusion
$\pic(X) \hookrightarrow W(X)$. The complex space is strongly $\Q$-factorial if the inclusion induces an isomorphism
$$
\pic(X) \otimes \Q \To W(X) \otimes \Q.
$$
In this case we have a well-defined morphism
$$
\left\{\begin{array}{rcl} W(X) & \To & N^1(X)\\
\sF & \longmapsto & c_1(\sF) := \frac{1}{m} c_1(\sF^{[m]})
\end{array}\right.
$$
where we choose $m \in \N^*$ such that $\sF^{[m]}$ is locally free. 
\end{remark}

Let us restate the results on MMP for projective morphisms in the form that we will
use in the sequel:

\begin{theorem} \cite{Nak87, DHP24, Fuj22a} \label{theorem-MMP}
Let $X$ be a normal compact strongly $\Q$-factorial K\"ahler space. Assume that there exists a boundary divisor $\Delta$ on $X$ such that the pair $(X, \Delta)$ is klt.
Let $\holom{f}{X}{Y}$ be a projective morphism onto a  normal compact K\"ahler space $Y$.

\begin{enumerate}[label={\rm(\arabic*)}]
\item If $K_X+\Delta$ is not $f$-nef, there exists a countable collection 
of rational curves $l_i \subset X$ such that $f(l_i)$ is a point and
$$
\NE{X/Y} = \NE{X/Y}_{K_X+\Delta \geq 0} + \sum_{i \in I} \R^+ [l_i].
$$
One has
$$
0 < -(K_X+\Delta) \cdot l_i \leq 2 \dim X
$$
and for every extremal ray $\R^+ [l_i] \subset \NE{X/Y}$ there exists a line bundle
$L \rightarrow X$ such that $\R^+ [l_i] = \NE{X/Y} \cap c_1(L)^\perp$.
\item\label{item:extremal ray} For every extremal ray $\R^+ [l_i] \subset \NE{X/Y}$ as above there exists a 
projective morphism $\holom{g}{Z}{Y}$ and a contraction morphism
$$
\holom{\varphi}{X}{Z}
$$
onto a normal compact K\"ahler space $Z$
such that $-(K_X+\Delta)$ is $\varphi$-ample and for any curve $C \subset X$ such that
$f(C)$ is a point we have
$$
\varphi(C)=pt. \ \Leftrightarrow \ [C] \in \R^+ [l_i].
$$
\item We can run a $K_X+\Delta$-MMP over $Y$, i.e. there exists a sequence of
bimeromorphic maps $\merom{\varphi_j}{X_j}{X_{j+1}}$ over $Y$ such that $\varphi_j$
is either the divisorial contraction of a $K_{X_j}+\Delta_j$-negative extremal ray
or its flip (if the contraction is small).
Moreover the pair $(X_{j+1}, (\varphi_j)_*(\Delta_j))$ is klt, the normal space $X_{j+1}$ is strongly $\Q$-factorial and the natural morphism $\holom{f_{j+1}}{X_{j+1}}{Y}$ is projective.
\item If $K_X+\Delta$ is $f$-pseudoeffective and $\Delta$ or $K_X+\Delta$ are $f$-big, any MMP with scaling by an $f$-ample line bundle terminates with a relative minimal model, i.e.
a projective morphism $\holom{f_m}{X_m}{Y}$ such that $K_{X_m}+\Delta_m$ is $f_m$-nef.
\item If $K_X+\Delta$ is not $f$-pseudoeffective, any MMP with scaling by an $f$-ample line bundle terminates with a relative Mori fibre space, i.e. 
a Mori fibre space $\holom{\varphi_m}{X_m}{Z}$ onto a normal compact K\"ahler space $Z$ 
of dimension at most $\dim X-1$ and a projective morphism $\holom{g}{Z}{Y}$ such that $f_m = g \circ \varphi_m$.
\end{enumerate}
\end{theorem}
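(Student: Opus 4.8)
The plan is to derive the five statements from the cited references \cite{Nak87, DHP24, Fuj22a}, verifying at each stage that the standing hypotheses---strong $\Q$-factoriality of $X$, kltness of $(X,\Delta)$, and projectivity of $f$ over a compact K\"ahler base---are exactly those under which the corresponding building block is available. In this sense the theorem is a compilation rather than a new result, and the work lies in matching the positivity and finiteness conditions to the form stated here.

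For the cone theorem (1), the starting point is that $f$ is projective, so there is an $f$-ample line bundle and the fibres of $f$ are projective. Mori's bend-and-break then produces, in the region where $K_X+\Delta$ is $f$-negative, the rational curves $l_i$ contracted by $f$, together with the length bound $0 < -(K_X+\Delta)\cdot l_i \leq 2\dim X$; the decomposition of $\NE{X/Y}$ into its $K_X+\Delta$-non-negative part and the rays $\R^+[l_i]$ is the usual relative cone theorem. The one place where the K\"ahler hypothesis on $Y$ is genuinely used is the final assertion, that each ray is cut out by the Chern class of an actual line bundle $L$: one first produces a real supporting class and then rationalises and integralises it using that $N^1(Y)$ is well behaved for $Y$ K\"ahler.

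For the contraction theorem (2), given an extremal ray I would apply the relative contraction (basepoint-free type) theorem to its supporting line bundle, obtaining $\holom{\varphi}{X}{Z}$ with $Z$ normal compact K\"ahler, a projective $\holom{g}{Z}{Y}$ with $f = g\circ\varphi$, and the stated characterisation of the curves contracted by $\varphi$; the $\varphi$-ampleness of $-(K_X+\Delta)$ is part of the construction. Item (3) is then the iteration: one repeatedly contracts $K_X+\Delta$-negative extremal rays, performing a divisorial contraction when the exceptional locus is a divisor and a flip when the contraction is small. The preservation of kltness, of strong $\Q$-factoriality, and of projectivity of $f_{j+1}$ at each step, together with the existence of the required flips, are precisely the contents of \cite{DHP24, Fuj22a}.

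For termination I would run the MMP with scaling by an $f$-ample line bundle. In case (4), where $K_X+\Delta$ is $f$-pseudoeffective and $\Delta$ or $K_X+\Delta$ is $f$-big, termination in the klt big setting yields a relative minimal model with $K_{X_m}+\Delta_m$ being $f_m$-nef. In case (5), where $K_X+\Delta$ is not $f$-pseudoeffective, the scaling MMP cannot terminate with a nef model and so must end with a fibre-type contraction, i.e. a relative Mori fibre space $\holom{\varphi_m}{X_m}{Z}$ factoring $f_m$ through a projective $\holom{g}{Z}{Y}$. The genuine obstacles here are the existence of flips and the termination of the scaling MMP in the K\"ahler category; the whole argument amounts to importing these deep inputs from \cite{DHP24, Fuj22a} (with \cite{Nak87} supplying the foundational relative framework) and checking the positivity and finiteness bookkeeping at each stage.
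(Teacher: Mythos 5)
Your overall strategy---treating the theorem as a compilation from \cite{Nak87, DHP24, Fuj22a} and checking that the hypotheses match---is exactly the paper's approach. However, there are two specific points where you assert that the references deliver the statement as written, whereas in fact they do not, and the paper has to supply a genuine (if short) argument.

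First, in item (2) you say that the projectivity of $\holom{g}{Z}{Y}$ and the $\varphi$-ampleness of $-(K_X+\Delta)$ are ``part of the construction''. They are not: Nakayama's contraction theorem \cite[Thm.4.12(2)]{Nak87} produces $\varphi$ and $Z$ but does not state that $g$ is projective. The paper closes this by observing that the supporting nef line bundle of the extremal ray descends to $Z$ by \cite[Thm.4.12(3,b)]{Nak87}, is by construction strictly positive on $\NE{Z/Y}$, and is therefore $g$-ample by \cite[Prop.4.7]{Nak87}; the K\"ahlerness of $Z$ then follows since $g$ is projective and $Y$ is K\"ahler. Without this step the induction in item (3) (projectivity of each $f_{j+1}$) and the whole use of the theorem later in the paper would be unjustified. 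Second, in items (4) and (5) you run the MMP ``with scaling by an $f$-ample line bundle'' and cite termination from \cite{DHP24, Fuj22a}; but those references state scaling by a relatively ample \emph{divisor}. In the K\"ahler category $\mbox{NS}(X)\otimes\R\subset N^1(X)$ is typically strict, so an $f$-ample line bundle need not be an $f$-ample divisor, and this substitution requires justification. The paper addresses it either by noting that the arguments go through verbatim for the first Chern class of a line bundle, or by covering $Y$ with finitely many open sets satisfying condition $(P)$ over which $c_1(L)$ is represented by an $\R$-divisor. These are the only two places where the theorem is more than bookkeeping, and both are missing from your write-up.
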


\begin{proof}
The first statement is \cite[Thm.4.12(1)]{Nak87} (applied in the case $Y=W$), the statement on the length of the extremal ray is \cite[Thm.2.44(3)b)]{DHP24}, \cite[Thm.9.1]{Fuj22a}.

The existence of the contraction in the second statement is \cite[Thm.4.12(2)]{Nak87}
(applied in the case $Y=W$), note that any open neighbourhood $W \subset U \subset Y$ is equal to $Y$. While \cite{Nak87} does not state that the morphism $g$ is projective, this is clear from the first item: the supporting nef line bundle descends to $Z$ \cite[Thm.4.12(3,b)]{Nak87} and by construction is strictly positive on $\NE{Z/Y}$. Thus it is relatively ample by \cite[Prop.4.7]{Nak87}.
 It is clear that $Z$ is K\"ahler since $Y$ is K\"ahler by assumption and the morphism $g$ is K\"ahler (even projective).

The existence of MMP in the third statement is \cite[Thm.1.7]{Fuj22a}, \cite[Thm.1.4(1)]{DHP24}. Strong $\Q$-factoriality is shown in \cite[Lemma 2.5]{DH20}, the klt property is shown as in the projective case \cite[Cor.3.42, Cor.3.43]{KM98}

The two last statements are stated in  \cite[Thm.1.7]{Fuj22a}, \cite[Thm.1.4]{DHP24}
for a MMP with a scaling by a relatively ample {\em divisor}, but the arguments work
if we scale by the first Chern class of a relatively ample line bundle. Alternatively note that given an $f$-ample line bundle $L$, we can take a finite open cover $Y_k$ of the base such that each $Y_k$ satisfies the condition $(P)$ in \cite{DHP24, Fuj22a} and $c_1(L)|_{\fibre{f}{Y_k}}$
is represented by an $\R$-divisor.
\end{proof}

\begin{lemma} \label{lemma-q-factorialisation}
Let $X$ be a normal compact K\"ahler space, and let $\Delta$ be a boundary divisor
such that $(X, \Delta)$ is klt. Then there exists a small projective modification
$\holom{\nu}{X'}{X}$ such that $X'$ is strongly $\Q$-factorial.
\end{lemma}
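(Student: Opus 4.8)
The plan is to realise $X'$ as the output of a relative MMP on a log resolution, following the classical construction of a $\Q$-factorialisation. First I would choose a projective log resolution $\holom{g}{W}{X}$ of $(X,\Delta)$ with $W$ smooth; as $g$ is projective and $X$ is compact K\"ahler, $W$ is again compact K\"ahler, and being smooth it is strongly $\Q$-factorial. Writing $E_1,\dots,E_k$ for the $g$-exceptional prime divisors and $a_i := a(E_i;X,\Delta) > -1$ for their discrepancies (the inequality being the klt hypothesis), I would fix a rational number $0 < \epsilon < \min_i(1+a_i)$ and set
$$
\Gamma := g_*^{-1}\Delta + (1-\epsilon)\sum_{i=1}^k E_i .
$$
Because $g$ is a log resolution, $(W,\Gamma)$ is log smooth with all coefficients in $[0,1)$, hence klt, and the discrepancy formula gives
$$
K_W+\Gamma = g^*(K_X+\Delta) + F, \qquad F := \sum_{i=1}^k (1+a_i-\epsilon)E_i,
$$
with $F \geq 0$ effective, $g$-exceptional and $\supp F = \ex(g)$.

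Next I would run a $(K_W+\Gamma)$-MMP over $X$. This is legitimate by Theorem~\ref{theorem-MMP}, since $W$ is compact K\"ahler and strongly $\Q$-factorial, $(W,\Gamma)$ is klt, and $g$ is projective. As $K_W+\Gamma \equiv_X F$ is effective it is $g$-pseudoeffective, and relative bigness is automatic because $g$ is bimeromorphic; hence Theorem~\ref{theorem-MMP}(4) ensures that an MMP with scaling by a $g$-ample line bundle terminates with a relative minimal model $\holom{\nu}{X'}{X}$ on which $K_{X'}+\Gamma'$ is $\nu$-nef. By Theorem~\ref{theorem-MMP}(3) the space $X'$ is strongly $\Q$-factorial and $\nu$ is projective.

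It remains to check that $\nu$ is small, which is the heart of the matter. Along the MMP the relation $K_{X_j}+\Gamma_j = g_j^*(K_X+\Delta)+F_j$ persists with each $F_j \geq 0$ effective and exceptional, since every flip and divisorial contraction preserves effectivity of the exceptional part via the negativity lemma. At the final step set $F' := K_{X'}+\Gamma' - \nu^*(K_X+\Delta)$. Then $F'$ is effective, $\nu$-exceptional and, being the difference of the $\nu$-nef class $K_{X'}+\Gamma'$ and the $\nu$-trivial class $\nu^*(K_X+\Delta)$, also $\nu$-nef. The negativity lemma therefore forces $F'=0$, which means that every $E_i$ has been contracted in the course of the MMP; consequently $\nu$ extracts no divisor and is the required small projective modification.

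The step I expect to be the main obstacle is this last one: guaranteeing that the relative minimal model is genuinely small rather than merely $\Q$-factorial. It is precisely the effectivity and $g$-exceptionality of $F$, fed into the negativity lemma, that rules out surviving exceptional divisors; the inputs specific to the K\"ahler category --- K\"ahlerness and strong $\Q$-factoriality of $W$ together with termination of the MMP --- are exactly what Theorem~\ref{theorem-MMP} provides.
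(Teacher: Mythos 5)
Your proof is correct and follows essentially the same route as the paper, which simply cites the threefold argument of \cite[Lem.2.27]{DH20} (run a relative MMP for a perturbed boundary on a projective log resolution) and notes that Theorem~\ref{theorem-MMP} makes it work in all dimensions. The details you supply --- the choice of $\Gamma$ with coefficient $1-\epsilon$ on the exceptional divisors and the negativity-lemma argument forcing all $E_i$ to be contracted --- are exactly the mechanism of that reference, and indeed mirror the paper's own proof of Lemma~\ref{lemma-factorise}.
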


\begin{proof}
This is shown for threefolds in \cite[Lem.2.27]{DH20}, by running a relative MMP
for some projective log-resolution. As shown by Theorem~\ref{theorem-MMP}, the results
of \cite{Fuj22a, DHP24} allow to run this MMP in any dimension. Thus the proof works without changes.
\end{proof}

We prove a variant of \cite[Lem.2.32]{DH20}, following their proof:

\begin{lemma} \label{lemma-factorise}
Let $\holom{\mu}{X'}{X}$ be a bimeromorphic morphism between normal compact K\"ahler spaces.
Assume that $X$ has strongly $\Q$-factorial klt singularities. Let 
$\holom{\tau}{X_0}{X'}$ be a log-resolution of $X'$ and the morphism $\mu$
such that $\mu_0:= \mu \circ \tau$ is a projective morphism (cf. Lemma~\ref{lemma-moishezon-projective}) with SNC exceptional locus. 

Then there exists a boundary divisor $\Delta_0$ such that $(X_0, \Delta_0)$ is klt and a decomposition of the morphism $\holom{\mu_0}{X_0}{X}$ into a finite sequence of
$K_{X_\bullet}+\Delta_\bullet$-negative bimeromorphic Mori contractions and flips
$$
\merom{\varphi_i}{X_i}{X_{i+1}} \qquad i=0, \ldots, m-1
$$
between normal compact K\"ahler spaces over $X$ such that $X_{m} \simeq X$.
\end{lemma}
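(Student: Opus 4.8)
The plan is to equip $X_0$ with a boundary $\Delta_0$ supported on the exceptional locus of $\mu_0$, run a relative MMP for $(X_0,\Delta_0)$ over $X$, and arrange matters so that the only relative minimal model is $X$ itself. Since $X$ is strongly $\Q$-factorial with klt singularities, $\mu_0^*K_X$ is defined and I would write
$$
K_{X_0} = \mu_0^* K_X + \sum_i a_i E_i, \qquad a_i > -1,
$$
where the $E_i$ are the ($\mathrm{SNC}$) $\mu_0$-exceptional divisors and the inequality $a_i>-1$ encodes the klt hypothesis. I then fix rational numbers $d_i$ with $\max(0,-a_i) < d_i < 1$ (a nonempty range precisely because $a_i>-1$) and set $\Delta_0 = \sum_i d_i E_i$. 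As $X_0$ is smooth and $\supp \Delta_0$ is SNC with all coefficients in $[0,1)$, the pair $(X_0,\Delta_0)$ is klt, and by construction
$$
K_{X_0} + \Delta_0 \equiv_X E' := \sum_i (a_i + d_i) E_i ,
$$
an effective $\mu_0$-exceptional $\Q$-divisor all of whose coefficients $a_i+d_i$ are strictly positive; hence $\supp E'$ is the \emph{entire} exceptional locus of $\mu_0$.

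Next I would run a $(K_{X_0}+\Delta_0)$-MMP over $X$. The hypotheses of Theorem~\ref{theorem-MMP} hold with $Y=X$: the space $X_0$ is smooth, hence strongly $\Q$-factorial, the pair $(X_0,\Delta_0)$ is klt, and $\mu_0$ is projective. Since $K_{X_0}+\Delta_0 \equiv_X E' \ge 0$ it is $\mu_0$-pseudoeffective, and because $\mu_0$ is bimeromorphic the relative bigness hypothesis is automatic (the general fibre is a point); thus by Theorem~\ref{theorem-MMP}(4) the MMP with scaling by a $\mu_0$-ample line bundle terminates with a relative minimal model $\holom{f_m}{X_m}{X}$. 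Each intermediate step $\merom{\varphi_i}{X_i}{X_{i+1}}$ is a $K_{X_i}+\Delta_i$-negative divisorial contraction or flip over $X$ between strongly $\Q$-factorial klt compact Kähler spaces, which is exactly the sequence required by the statement.

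It remains to identify $X_m$ with $X$, which is the crux of the argument. Let $E'_m$ be the pushforward of $E'$ to $X_m$; it is effective, $f_m$-exceptional, and $\merom{}{}{}$-compatibility of the MMP gives $K_{X_m}+\Delta_m \equiv_X E'_m$. Because $X_m$ is a relative minimal model, $K_{X_m}+\Delta_m$, and therefore $E'_m$, is $f_m$-nef. The negativity lemma \cite{KM98} then forces an effective, $f_m$-exceptional, $f_m$-nef $\Q$-divisor to vanish, so $E'_m = 0$. Since $\supp E'$ was the full exceptional locus of $\mu_0$, this means every exceptional divisor has been contracted along the MMP, i.e.\ $f_m$ is small. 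Finally, a small projective bimeromorphic morphism onto the $\Q$-factorial space $X$ is an isomorphism — pulling back an $f_m$-ample divisor and using $\Q$-factoriality of $X$ shows it would be $f_m$-numerically trivial, forcing $f_m$ to be finite and hence an isomorphism — so $X_m \simeq X$, and the maps $\varphi_0,\ldots,\varphi_{m-1}$ decompose $\mu_0$ as claimed.

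The main obstacle is the termination-and-endpoint analysis of the last two paragraphs: one must confirm that the relative MMP furnished by Theorem~\ref{theorem-MMP} actually terminates in this birational setting (which rests on the relative pseudoeffectivity of $K_{X_0}+\Delta_0$, guaranteed by the effectivity of $E'$), and then extract from the negativity lemma that the resulting minimal model has contracted precisely the exceptional locus, so that the $\Q$-factoriality of $X$ upgrades the small morphism $f_m$ to an isomorphism. The construction of $\Delta_0$ and the verification that the $\varphi_i$ are legitimate MMP steps are, by contrast, routine once the discrepancy formula and Theorem~\ref{theorem-MMP} are in hand.
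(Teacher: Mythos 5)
Your proposal is correct and follows essentially the same route as the paper's proof: the same discrepancy computation, a boundary supported on the full exceptional locus with coefficients chosen so that $K_{X_0}+\Delta_0$ is relatively $\Q$-linearly equivalent to an effective exceptional divisor with strictly positive coefficients, termination of the relative MMP via Theorem~\ref{theorem-MMP} (bigness being vacuous for bimeromorphic morphisms), the negativity lemma to conclude that all exceptional divisors are contracted, and strong $\Q$-factoriality of $X$ to upgrade the resulting small morphism to an isomorphism. The only cosmetic difference is that the paper takes the uniform coefficient $1-\epsilon$ rather than your individual $d_i$, and cites \cite[Lem.2.4]{DH20} for the final step.
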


\begin{proof}
We denote by $E_1, \ldots, E_k \subset X_0$ the $\mu_0$-exceptional divisors. 
Since $X$ has klt singularities we have
$$
K_{X_0} \sim_\Q \mu_0^* K_X + \sum_{i=1}^k a_i E_i
$$
with $a_i>-1$ for all $i=1,\ldots,k$. Thus we can choose $0 < \epsilon < 1$ such that $a_i-\epsilon>-1$ for
all $i=1,\ldots,k$. Since $X_0$ is smooth and the divisor $\sum_{i=1}^k E_i$ has SNC support, the pair
$(X_0, (1-\epsilon) \sum_{i=1}^k E_i)$ is klt. Since the morphism $\mu_0$ is projective and the boundary is big on the general fibre (an empty condition for bimeromorphic maps), we can run by Theorem~\ref{theorem-MMP} a terminating directed MMP 
$$
\merom{\varphi}{X_0}{X_m}
$$
over $X$ such that $K_{X_m}+(1-\epsilon) \sum_{i=1}^k  \varphi_* E_i$ is relatively nef for the bimeromorphic morphism $\holom{\mu_m}{X_m}{X}$. 

Since
$$
K_{X_m} + (1-\epsilon) \sum_{i=1}^k \varphi_*  E_i \sim_\Q \mu_m^* K_X + \sum_{i=1}^k (1-\epsilon + a_i) \varphi_* E_i,
$$
the effective divisor $\sum_{i=1}^k (1-\epsilon + a_i) \varphi_* E_i$ is $\mu_m$-nef and $\mu_m$-exceptional.
Thus the $\Q$-divisor $- \sum_{i=1}^k (1-\epsilon + a_i) \eta_* E_i$ is effective by the negativity lemma \cite[Lem.1.3]{Wan21} and
therefore the MMP $\varphi$ contracts all the divisors $E_i$. Since the MMP does not extract any divisors, the exceptional locus of $\mu_m$ has codimension at least two. By assumption $X$ is strongly $\Q$-factorial, so $\mu_m$ is an isomorphism \cite[Lem.2.4]{DH20}.
\end{proof}

\section{Proof of the main theorem}

We will prove the following singular variant, which obviously implies Theorem~\ref{th:projectivity criterion}.

\begin{theorem}\label{theorem-singular}
Let $f\colon X\to Y$ be a fibration between normal compact Kähler spaces.
Assume that $X$ has strongly $\Q$-factorial klt singularities.
Assume one of the following:
\begin{itemize}
\item The normal space $Y$ has klt singularities and the natural  map \cite[Thm.1.9]{KS21}
$f^*:H^0(Y,\Omega^{[2]}_Y)\To H^0(X,\Omega^{[2]}_X)$
is an isomorphism.
\item The morphism $f$ is Moishezon.
\end{itemize}
Then $f$ is a projective morphism.
\end{theorem}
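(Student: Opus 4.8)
The plan is to treat the two hypotheses separately, and in fact to reduce the first case to the second: for the first bullet I would construct, after passing to a smooth model, a line bundle that is relatively big, thereby showing that $f$ is Moishezon, so that the second bullet applies; the second bullet is then handled by a relative MMP that descends projectivity from a resolution down to $X$.

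For the first case, I would first choose Kähler resolutions $\holom{\pi}{\tilde X}{X}$ and $\holom{q}{\tilde Y}{Y}$ (obtained by blowing up smooth centres, so that the total spaces stay Kähler) together with a holomorphic model $\holom{\tilde f}{\tilde X}{\tilde Y}$ of $f$, which is again a fibration. Since $X$ and $Y$ are klt, hence have rational singularities, the extension theorem \cite[Thm.1.9]{KS21} identifies $H^0(X,\Omega^{[2]}_X)\cong H^0(\tilde X,\Omega^2_{\tilde X})$ and $H^0(Y,\Omega^{[2]}_Y)\cong H^0(\tilde Y,\Omega^2_{\tilde Y})$ compatibly with pullback, so the hypothesis becomes the statement that $\tilde f^*\colon H^{2,0}(\tilde Y)\To H^{2,0}(\tilde X)$ is an isomorphism. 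Now $\tilde f^* H^2(\tilde Y,\Q)\subset H^2(\tilde X,\Q)$ is a sub-Hodge-structure (pullback being a morphism of Hodge structures, injective because $\tilde f$ is surjective), and by hypothesis the quotient Hodge structure $V:=H^2(\tilde X,\Q)/\tilde f^*H^2(\tilde Y,\Q)$ satisfies $V^{2,0}=V^{0,2}=0$, i.e. $V$ is pure of type $(1,1)$. Taking the orthogonal complement $C$ of $\tilde f^*H^2(\tilde Y,\Q)$ with respect to a polarisation of $H^2(\tilde X,\Q)$ yields a splitting $H^2(\tilde X,\Q)=\tilde f^*H^2(\tilde Y,\Q)\oplus C$ with $C\cong V$ of type $(1,1)$; by the Lefschetz $(1,1)$-theorem $C$ consists of classes of line bundles, so $C_\R\subset\mbox{NS}(\tilde X)\otimes\R$. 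Writing a Kähler class $\omega_{\tilde X}=\alpha+\tilde f^*\beta$ according to $H^2(\tilde X,\R)=C_\R\oplus\tilde f^*H^2(\tilde Y,\R)$, approximating $\alpha\in C_\R$ by a rational (hence algebraic) class and clearing denominators, I obtain a line bundle $L$ on $\tilde X$ with $c_1(L)+\tilde f^*\beta'$ Kähler; after adding a large pullback of a Kähler class of $\tilde Y$ to make the second summand Kähler, Lemma~\ref{lem:relative positivity} shows that $L$ is $\tilde f$-ample. In particular $\tilde f$ is Moishezon, and since $\pi$ and $q$ are bimeromorphic this forces $f$ to be Moishezon as well, reducing us to the second case.

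For the second case I would start from Lemma~\ref{lemma-moishezon-projective}, giving a projective log-resolution $\holom{\mu_0}{X_0}{X}$ for which $g_0:=f\circ\mu_0\colon X_0\To Y$ is projective, and from Lemma~\ref{lemma-factorise}, which provides a boundary $\Delta_0$ with $(X_0,\Delta_0)$ klt and decomposes $\mu_0$ into a terminating sequence of $(K+\Delta)$-negative Mori contractions and flips over $X$ ending at $X_m\simeq X$. The key point is that this MMP over $X$ is simultaneously an MMP over $Y$: pulling back a Kähler class $\omega_X$ of $X$ to $A_i:=\mu_i^*\omega_X$ on $X_i$, one has $\NE{X_i/X}=\NE{X_i/Y}\cap A_i^\perp$, an extremal face of $\NE{X_i/Y}$, so every $(K_{X_i}+\Delta_i)$-negative extremal ray of $\NE{X_i/X}$ is also such a ray of $\NE{X_i/Y}$, contracted by the same morphism. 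Hence the sequence of Lemma~\ref{lemma-factorise} is an instance of the relative MMP over $Y$ of Theorem~\ref{theorem-MMP}, which preserves projectivity over $Y$ at each step by item~(3). Since $g_0$ is projective over $Y$, running the MMP down the factorisation keeps $X_j\To Y$ projective, and as it terminates at $X_m\simeq X$ we conclude that $f$ is projective.

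The main obstacle, in my view, lies in the bookkeeping of the reductions rather than in a single hard estimate. In the first case the delicate points are the compatibility of the identifications \cite[Thm.1.9]{KS21} with $f^*$ (needed to transport the hypothesis to $\tilde f$) and the claim that a relatively big line bundle on the smooth model really forces $f$ itself to be Moishezon; the Hodge-theoretic core, namely that the purely $(1,1)$ quotient $V$ splits off as algebraic classes, is clean once polarisability is invoked. In the second case the crux is the identification of the over-$X$ extremal rays with over-$Y$ extremal rays, which is precisely what lets Theorem~\ref{theorem-MMP}(3) supply projectivity of each $X_j\To Y$ rather than merely over $X$; this is where the strong $\Q$-factoriality of $X$ and the Kähler hypothesis on $Y$ are really used.
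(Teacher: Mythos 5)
Your overall architecture coincides with the paper's: handle the Moishezon case by descending projectivity from a log-resolution through the MMP of Lemma~\ref{lemma-factorise}, and reduce the first hypothesis, after passing to smooth models via \cite[Thm.1.9]{KS21}, to the Moishezon case. The genuine problem is in the Hodge-theoretic core of the first case. You split $H^2(\tilde X,\Q)=\tilde f^*H^2(\tilde Y,\Q)\oplus C$ by taking the orthogonal complement ``with respect to a polarisation of $H^2(\tilde X,\Q)$''. For a compact K\"ahler manifold that is not already projective, the weight-two rational Hodge structure is in general \emph{not} polarisable: the Hodge--Riemann pairing $\int\alpha\wedge\beta\wedge\omega^{n-2}$ is only defined over $\R$ when the K\"ahler class is irrational, and rational sub-Hodge structures of a non-polarisable Hodge structure need not admit rational complements. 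But the rational, type-$(1,1)$ complement $C$ is exactly what you need in order to approximate $\alpha\in C_\R$ by a \emph{rational} class that is still of type $(1,1)$; without it the approximation step fails. The paper circumvents this entirely: starting from a rational class $\eta$ close to a K\"ahler class, it writes $\eta=f^*(\gamma)+\beta+f^*(\bar\gamma)$, uses fibre integration to form $\eta_Y:=\frac{1}{(d+1)f_*(\eta^d)}f_*(\eta^{d+1})\in H^2(Y,\Q)$, and checks via the projection formula that $\widetilde\eta:=\eta-f^*(\eta_Y)$ is simultaneously rational and of type $(1,1)$ --- no splitting of Hodge structures is invoked.

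There is a second, more repairable, gap in your Moishezon case. You assert $\NE{X_i/X}=\NE{X_i/Y}\cap A_i^\perp$ with $A_i=\mu_i^*\omega_X$. The inclusion $\subseteq$ is clear, but $\supseteq$ is not: a class $\alpha\in\NE{X_i/Y}$ with $A_i\cdot\alpha=0$ does satisfy $(\mu_i)_*\alpha=0$, yet it is not evident that such a class lies in the closed cone generated by $\mu_i$-contracted curves. Without the equality, $\NE{X_i/X}$ is merely contained in an extremal face of $\NE{X_i/Y}$, which does not make its own extremal rays extremal in the larger cone. The paper argues ray by ray instead: if $R=\R^+[C]$ is the ray being contracted by $\holom{\varphi}{X_i}{Z}$ and $l_1+l_2\in R$ with $l_j\in\NE{X_i/Y}$, then $\varphi_*l_1+\varphi_*l_2=0$ with both summands in $\NE{Z}$, which contains no line because $Z$ is K\"ahler, so $\varphi_*l_j=0$; since $\ker\varphi_*=\R[C]$ and the $l_j$ are pseudoeffective, $l_j\in\R^+[C]$. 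Replacing your face argument by this one (and making explicit the rigidity-lemma identification of the contraction over $Y$ with the one over $X$, which you leave implicit) brings your Step 2 in line with the paper.
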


Namikawa showed in \cite{Nam02} that a normal compact K\"ahler spaces with $1$-rational singularities that is Moishezon is even projective. Since klt singularities are rational this is more general than our statement (in the case where $Y$ is a point).
The technique is quite different: while Namikawa's proof aims at constructing an ample line bundle on $X$, we obtain the polarisation as a consequence of running a suitable MMP. 

\begin{proof}[Proof of Theorem \ref{theorem-singular}]
We divide the proof into three steps.

{\em Step 1. We prove the theorem under the first assumption, assuming moreover
that $X$ and $Y$ are smooth.}
Let us denote $d:=\dim X - \dim Y$, and let $\omega$ be a Kähler class on $X$.

Let $\eta\in H^2(X,\Q)$ be any class, then according to the assumption we can write
\begin{equation}
\label{equation-eta}
\eta=f^*(\gamma)+\beta+f^*(\bar\gamma)
\end{equation}
with $\gamma\in H^0(Y,\Omega^2_Y)$ and $\beta$ of type $(1,1)$.
In particular for every $y \in Y$ the restriction of $\eta$ to the fibre $X_y$
is of type $(1,1)$. Since $Y$ is compact we can choose  
$\eta\in H^2(X,\Q)$ sufficiently close to $\omega$ such that $\beta$ is still a relative K\"ahler class.

We claim that we can replace $\eta$ with $\widetilde{\eta}$ so that $\widetilde{\eta}-\eta\in f^*H^2(Y,\R)$ and $\widetilde{\eta}$ is also of type $(1,1)$. Then we know by the Lefschetz $(1,1)$-theorem that, up to replacing $\widetilde{\eta}$ by $m \widetilde{\eta}$ with $m \in \N$ sufficiently divisible, there exists a line bundle $L \in\pic(X)$ such that
$\widetilde{\eta} = c_1(L)$. By construction $c_1(L)$ is a
relative K\"ahler class, so by \cite[Lemma 4.4]{Fuj78} 
we can find a K\"ahler class $[\omega_Y]$ on $Y$ such that
$c_1(L)+f^* [\omega_Y]$ is K\"ahler. 
Therefore $L$ is $f$-ample by Lemma~\ref{lem:relative positivity}.

{\em Proof of the claim.}
Note that \eqref{equation-eta} implies that 
\begin{align}
f_*(\eta^d)&=f_*(\beta^d) \qquad\text{and}\nonumber\\
f_*(\eta^{d+1})&=f_*(\beta^{d+1})+(d+1) f_*(\eta^d)\left(\gamma+\bar\gamma\right). \label{eq:beta^(d+1)}
\end{align}
This is indeed a consequence of the projection formula
\[f_*\left(\alpha\wedge f^*(\delta)\right)=f_*(\alpha)\wedge \delta\]
and of the fact that $f_*$ is a morphism of Hodge structures of bidegree $(-d,-d)$. Let us note that the equality~\eqref{eq:beta^(d+1)} is nothing but the Hodge decomposition of $f_*(\eta^{d+1})$ in $H^2(Y,\R)$.

Since $f_*$ is defined over $\Q$ and $\eta$ is a rational class
we have $f_*(\eta^k) \in H^{2k-2d}(Y, \Q)$ for every $k \in \N$. Moreover 
\[
f_*(\eta^d)= \int_F \beta^d \in H^0(Y, \Q) 
\]
is a positive rational number, since by construction the restriction of $\beta$ to the general fibre $F$ is a K\"ahler class.
With this in mind, we can set
\[
\eta_Y :=\frac{1}{(d+1) f_*(\eta^d)}f_*(\eta^{d+1}) \in H^2(Y, \Q)
\]
and can consider
\[
\widetilde{\eta}:=\eta-f^*(\eta_Y)\in H^2(X,\Q).
\]
Using the equality~\eqref{eq:beta^(d+1)}, we see that
\[
\widetilde{\eta}=\beta-\frac{1}{(d+1) f_*(\eta^d)}f^*\left(f_*(\beta^{d+1})\right)
\]
is of type $(1,1)$.  This is the sought rational $(1,1)$-class and it proves the claim.

\medskip
{\em Step 2. We prove the theorem under the second assumption.} 
By Lemma~\ref{lemma-moishezon-projective} we can find a log-resolution
$\holom{\mu_0}{X_0}{X}$ such that $\mu_0$ and 
$$
f \circ \mu_0 : X_0 \rightarrow Y
$$ 
are projective morphisms. 
Applying Lemma~\ref{lemma-factorise} we obtain that the bimeromorphic map $\mu_0$
decomposes into a sequence
$$
\merom{\varphi_i}{X_i}{X_{i+1}}
$$
of divisorial contractions and flips over $X$. Denote by $\holom{\mu_i}{X_i}{X}$ the natural morphisms.
Since $f_0:=f \circ \mu_0$ is projective and $f_m \simeq f$
we are done if we show that projectiveness of $f_i = f \circ \mu_i$ is invariant under every step of our MMP. 
We will show this for the first contraction, the statement then follows by induction.

Denote by $\holom{\varphi}{X_0}{Z}$ the elementary Mori contraction 
of the extremal ray $\R^+ [C]$ in $\NE{X_0/X}$
(so $\varphi$
is small if $\varphi_0$ is a flip, and $\varphi=\varphi_0$ in the divisorial case), and let $\holom{g}{Z}{Y}$ be the natural map.
We have natural inclusions
$$
\NE{X_0/X} \subset \NE{X_0/Y} \subset \NE{X_0}
$$ 
and we claim that $\R^+ [C] \in  \NE{X_0/Y}$ is still an extremal ray. Indeed if $l_1, l_2
\in \NE{X_0/Y}$ are pseudoeffective classes such that $l_1+l_2 \in \R^+ [C]$, then
$$
0 = \varphi_* (l_1+l_2) = \varphi_* l_1 + \varphi_* l_2.
$$
Since $Z$ is K\"ahler by Theorem \ref{theorem-MMP}(2),
and $\varphi_* l_j \in \NE{Z}$ we have $\varphi_* l_j = 0$ for $j=1,2$.
Yet the kernel of 
$$
\holom{\varphi_*}{N_1(X_0)}{N_1(Z)}
$$
is exactly $\R [C]$. Thus we obtain $l_j \in \R^+ [C]$.

Since  $\R^+ [C] \in  \NE{X_0/Y}$ is an extremal ray 
we know by Theorem~\ref{theorem-MMP}~\ref{item:extremal ray}
that there exists a projective contraction morphism $\holom{\eta}{X_0}{\tilde X_1}$ such
that $\tilde X_1 \rightarrow Y$ is projective. 
Yet $\varphi_0$ and $\eta$ contract exactly the same curves,
so by the rigidity lemma \cite[Lem.4.1.13]{BS95} we have an isomorphism $\tilde X_1 \rightarrow Z$ that identifies $\varphi$ and $\eta$.

If $\varphi$ is divisorial we have $Z \simeq X_1$, so this proves the claim.
If $\varphi$ is small, note that the flip $\holom{\varphi^+}{X_1}{Z}$
is a projective morphism polarised by the $\Q$-line bundle $K_{X_1}$, so $g \circ \varphi^+$
is projective by Remark \ref{remark-compose-projective}.

\medskip
{\em Step 3. We prove the theorem under the first assumption.} 
Let $\holom{\mu_Y}{Y'}{Y}$ and $\holom{\mu_X}{X'}{X}$ be projective modifications
by compact K\"ahler manifolds such that we have an induced fibration
$f': X' \rightarrow Y'$. By \cite[Thm.1.2]{KS21} we have isomorphisms
$$
H^0(Y',\Omega^{2}_{Y'}) \simeq H^0(Y,\Omega^{[2]}_Y), \qquad
H^0(X',\Omega^{2}_{X'}) \simeq H^0(X,\Omega^{[2]}_X)
$$
and therefore the injection
$$
(f')^* : H^0(Y',\Omega^{2}_{Y'}) \rightarrow H^0(X',\Omega^{2}_{X'})
$$
is an isomorphism. By Step~1 the fibration $f'$ is projective, and therefore the 
fibration $f$ is Moishezon (note that $X'$ is strongly $\Q$-factorial, so the push-forward
of a relatively ample line bundle induces a relatively big line bundle). Yet by Step~2 this implies that $f$ is projective.
\end{proof}

\section{Applications of the main result}
\label{section-applications}

For lack of reference let us state the K\"ahler version of \cite[Thm.7.1]{Kol86}:

\begin{theorem} \cite{Kol86, Tak95} \label{kollar-takegoshi}
Let $\holom{f}{X}{Y}$ be a fibration between normal compact K\"ahler spaces with rational singularities,
and let $F$ be a general fibre. Then the following statements are equivalent:
\begin{itemize}
\item $R^i f_* \sO_X=0$ for all $i>0$;
\item $h^i(F, \sO_F)=0$ for all $i>0$.
\end{itemize}
\end{theorem}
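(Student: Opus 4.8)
The plan is to reduce to the case where $X$ is smooth, dispatch the easy implication by base change over the smooth locus, and then handle the substantive converse using Takegoshi's Kähler analogues of Kollár's torsion-freeness and decomposition theorems.

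\textbf{Reduction to $X$ smooth.} Let $\pi\colon X'\to X$ be a resolution of singularities. Since $X$ has rational singularities, $R\pi_*\mathcal{O}_{X'}\cong\mathcal{O}_X$, and therefore $Rf_*\mathcal{O}_X\cong R(f\circ\pi)_*\mathcal{O}_{X'}$; in particular $R^if_*\mathcal{O}_X=0$ if and only if $R^i(f\circ\pi)_*\mathcal{O}_{X'}=0$. Over the dense open locus where $Y$ is smooth and $f$ is equidimensional the general fibre $F$ has rational singularities (so that $h^i(F,\mathcal{O}_F)=h^i(F',\mathcal{O}_{F'})$ for the fibre $F'$ of $f\circ\pi$ resolving $F$). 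Thus I may replace $f$ by $f\circ\pi$ and assume that $X$ is a compact K\"ahler manifold and that the general fibre is smooth.

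\textbf{Easy direction.} Let $U\subseteq Y$ be the dense Zariski-open subset over which $Y$ is smooth and $f$ restricts to a proper holomorphic submersion. Then $f$ over $U$ is a smooth proper K\"ahler morphism, so by Grauert's base-change theorem $R^if_*\mathcal{O}_X|_U$ is locally free with fibre $H^i(X_y,\mathcal{O}_{X_y})=H^i(F,\mathcal{O}_F)$ at each $y\in U$. If $R^if_*\mathcal{O}_X=0$ then its restriction to $U$ vanishes, whence $h^i(F,\mathcal{O}_F)=0$ for all $i>0$.

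\textbf{Hard direction.} Conversely assume $h^i(F,\mathcal{O}_F)=0$ for all $i>0$. By the same base-change description, $R^if_*\mathcal{O}_X|_U$ is locally free of rank $h^i(F,\mathcal{O}_F)=0$ (the Hodge number $h^{0,i}$ being constant over the connected smooth family), so $R^if_*\mathcal{O}_X$ is supported on the proper analytic subset $Y\setminus U$. The point is to upgrade this generic vanishing to vanishing on all of $Y$, and here the positivity input enters. I would invoke Takegoshi's theorem \cite{Tak95}: for every $j$ the sheaf $R^jf_*\omega_X$ is torsion-free on $Y$ and $Rf_*\omega_X\cong\bigoplus_j R^jf_*\omega_X[-j]$. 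Combining this with relative Grothendieck–Serre duality for the Cohen–Macaulay morphism $f$ (so that $R^if_*\mathcal{O}_X$ is controlled by $R^{d-i}f_*\omega_{X/Y}$, with $d$ the relative dimension), the torsion-freeness rules out nonzero sections of $R^if_*\mathcal{O}_X$ supported on $Y\setminus U$. Generic vanishing then forces $R^if_*\mathcal{O}_X=0$ globally, which is the desired statement.

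\textbf{Main obstacle.} The delicate step is the last one: transporting Takegoshi's torsion-freeness and decomposition results, which are naturally phrased for the canonical sheaf $\omega_X$, into the no-embedded-support statement for $R^if_*\mathcal{O}_X$ that kills the contribution from $Y\setminus U$. The duality bookkeeping is subtle because $Y$ is only assumed to have rational (hence Cohen–Macaulay) singularities rather than to be smooth, and this is exactly where the hypothesis on the singularities of $Y$ is used. Once this is set up, the remaining argument is essentially Kollár's proof of \cite[Thm.7.1]{Kol86}, with the projective vanishing theorems replaced throughout by Takegoshi's K\"ahler versions \cite{Tak95}.
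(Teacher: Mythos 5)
Your proposal is correct and follows essentially the same route as the paper: reduce to $X$ smooth via the rational-singularities hypothesis, then run Koll\'ar's argument for \cite[Thm.7.1]{Kol86} with the torsion-freeness and decomposition inputs supplied by Takegoshi \cite{Tak95} and duality in the analytic category (the paper cites \cite{RR70} for the latter). The paper's proof is essentially a proof by citation, so your additional bookkeeping (base change over the smooth locus for the easy direction, and the check that the general fibre of the resolved family still computes $h^i(F,\sO_F)$) is a welcome elaboration rather than a deviation.
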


\begin{proof}
Since $X$ has rational singularities we can replace it with a desingularisation without changing the statement. Now we follow the proof of \cite[Thm.7.1]{Kol86}: this proof is based on \cite[Thm.2.1]{Kol86} and general duality theory. The K\"ahler case of \cite[Thm.2.1]{Kol86} is shown in \cite[Thm.~II~and~IV]{Tak95}, duality theory in the analytic setting is established in \cite{RR70}.
\end{proof}

The assumption in Theorem~\ref{theorem-singular} can easily be verified: 

\begin{corollary}\label{cor:R1=R2=0}
Let $f\colon X\to Y$ be a fibration between normal compact Kähler spaces with klt singularities.
Assume that $X$ is strongly $\Q$-factorial.
If 
$$
R^1 f_*\sO_X = R^2 f_*\sO_X = 0,
$$
then $f$ is projective.
In particular 
\begin{itemize}
\item if $h^i(F, \sO_F)=0$ for all $i>0$,  or
\item if $f$ is bimeromorphic
\end{itemize}
the morphism $f$ is projective.
\end{corollary}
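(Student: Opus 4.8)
The plan is to deduce the corollary from the first alternative of Theorem~\ref{theorem-singular}. Since $X$ is strongly $\Q$-factorial and both $X$ and $Y$ have klt singularities, all hypotheses of that theorem are already in place except the assertion that
$$
f^*\colon H^0(Y,\Omega^{[2]}_Y)\To H^0(X,\Omega^{[2]}_X)
$$
is an isomorphism. Because $f$ is a fibration this pullback is injective: a reflexive $2$-form on $Y$ pulling back to zero must vanish on the open locus where $f$ is a submersion of manifolds, hence everywhere by normality. It therefore suffices to prove that the two (finite-dimensional) spaces have the same dimension.

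First I would control the cohomology of the structure sheaf. As $f$ is a fibration onto the normal space $Y$, Stein factorisation gives $f_*\sO_X=\sO_Y$. The vanishing $R^1f_*\sO_X=R^2f_*\sO_X=0$ then trivialises the relevant corner of the Leray spectral sequence $H^p(Y,R^qf_*\sO_X)\Rightarrow H^{p+q}(X,\sO_X)$: the terms $E_2^{1,1}$, $E_2^{0,2}$ and $E_2^{0,1}$ all vanish, so the only surviving graded piece of $H^2(X,\sO_X)$ is $E_2^{2,0}=H^2(Y,\sO_Y)$ and the edge homomorphism
$$
f^*\colon H^2(Y,\sO_Y)\To H^2(X,\sO_X)
$$
is an isomorphism. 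In particular $\dim H^2(X,\sO_X)=\dim H^2(Y,\sO_Y)$.

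Next I would bridge $\sO$-cohomology and reflexive $2$-forms by Hodge theory. Choosing projective resolutions $X'\to X$ and $Y'\to Y$ by compact K\"ahler manifolds, and using that klt singularities are rational, gives $H^2(X,\sO_X)\simeq H^2(X',\sO_{X'})$ and likewise for $Y$; Hodge symmetry on $X'$ and $Y'$ yields $\dim H^2(X',\sO_{X'})=\dim H^0(X',\Omega^2_{X'})$; and the comparison isomorphism $H^0(X',\Omega^2_{X'})\simeq H^0(X,\Omega^{[2]}_X)$ of \cite[Thm.1.2]{KS21} (with its analogue for $Y$) converts everything back into reflexive forms. Concatenating these equalities with the Leray isomorphism gives
$$
\dim H^0(X,\Omega^{[2]}_X)=\dim H^2(X,\sO_X)=\dim H^2(Y,\sO_Y)=\dim H^0(Y,\Omega^{[2]}_Y),
$$
so the injective map $f^*$ on reflexive $2$-forms is an isomorphism and Theorem~\ref{theorem-singular} applies, proving that $f$ is projective.

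Finally, the two ``in particular'' cases reduce to the hypothesis $R^1f_*\sO_X=R^2f_*\sO_X=0$ through the Koll\'ar--Takegoshi equivalence of Theorem~\ref{kollar-takegoshi}: if $h^i(F,\sO_F)=0$ for all $i>0$ then $R^if_*\sO_X=0$ for all $i>0$, and if $f$ is bimeromorphic the general fibre is a point, so the same vanishing of $h^i(F,\sO_F)$ holds and the conclusion follows again. The only genuinely delicate point is the passage between $H^2(-,\sO)$ and $H^0(-,\Omega^{[2]})$ on singular spaces: one must ensure that Hodge symmetry and the Kebekus--Schnell comparison hold in the required generality and that the Leray edge map is honestly $f^*$. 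Since these results are used only to compare \emph{dimensions}, I can sidestep any question of compatibility between the various conjugation isomorphisms and $f^*$.
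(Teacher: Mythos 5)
Your argument is correct and follows essentially the same route as the paper: Leray with $f_*\sO_X=\sO_Y$ and the vanishing of $R^1,R^2$ gives $H^2(Y,\sO_Y)\simeq H^2(X,\sO_X)$, Hodge theory on resolutions (the paper simply cites Hodge duality for rational singularities from Kirschner) transfers this to reflexive $2$-forms, and the ``in particular'' cases are handled via Theorem~\ref{kollar-takegoshi}. Your extra care about injectivity of $f^*$ plus the dimension count is a clean way to sidestep the compatibility of the Hodge isomorphisms with pullback, but it is a refinement of, not a departure from, the paper's proof.
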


\begin{proof}
Since $R^1 f_*\sO_X = R^2 f_*\sO_X = 0$, the Leray spectral sequence shows that
$H^2(X, \sO_X) \simeq H^2(Y, \sO_Y)$. Since klt singularities are rational,
Hodge duality holds (see e.g. \cite[Cor.B.2.8]{Kir15}), so we obtain the isomorphism
in the assumption of Theorem~\ref{theorem-singular}. The last statement is now clear by
Theorem~\ref{kollar-takegoshi} (and since $f$ bimeromorphic corresponds to the case $\dim(F)=0$).
\end{proof}

\begin{remark}\label{rk:R2=0 implies locally proj}
It was proven by Nakayama \cite[Prop.3.3.1]{Nak02b} that a Kähler morphism $f\colon X\to Y$ with $R^2f_*\sO_X=0$ is locally projective. It is not possible to improve this statement, i.e. Corollary~\ref{cor:R1=R2=0} does not hold if we only assume $R^2f_*\sO_X=0$:
let $X$ be a compact complex torus of dimension $2$ and algebraic dimension $a(X)=1$.
Then the algebraic reduction is a fibration $f\colon X\to Y$ onto an elliptic curve $Y$.
For dimension reasons we have $R^2f_*\sO_X=0$ (but $R^1f_*\sO_X \simeq \sO_Y\neq 0$).
The morphism $f$ is not projective, since otherwise $X$ would also be projective.
\end{remark}

\begin{proof}[Proof of Theorem~\ref{theorem-MRC}]
The MRC fibration is an almost holomorphic fibration 
$\merom{f}{X}{Y}$ such that the general fibre $F$ is rationally chain connected.
Since $F$ has klt singularities, a desingularisation $F' \rightarrow F$ 
is rationally chain connected \cite{HM07} and therefore
$h^i(F, \sO_F) = h^i(F', \sO_{F'})=0$ for all $i>0$. Choose now any modification
$Y' \rightarrow Y$ such that $Y'$ is a compact K\"ahler space with klt singularities, and choose 
a resolution of the indeterminacies $X' \rightarrow X$ such that $X'$ is a compact K\"ahler space with strongly $\Q$-factorial klt singularities (e.g. $X'$ is smooth).
Then the morphism $f':X' \rightarrow Y'$ satisfies the assumptions of Corollary~\ref{cor:R1=R2=0} and is therefore projective. 
\end{proof}

\begin{remark} \label{remark-MRC-final}
In general the MRC fibration is not holomorphic, so a priori it has no distinguished bimeromorphic model. However we can choose a model which is canonical in many ways and a projective morphism: let $X$ be a normal compact K\"ahler space with klt singularities, and let
$F$ be a general fibre of the MRC fibration. Since the MRC fibration is almost holomorphic, the class of $F$ is contained in a unique irreducible
component of the cycle space $\mathcal C(X)$, and we denote its normalisation by $Z$.
The universal family $\Gamma \rightarrow Z$ defines a canonically defined meromorphic MRC fibration $\merom{f}{X}{Z}$. 
Denote by $Z_c \rightarrow Z$ the canonical modification (this exists by \cite[Thm.1.16]{Fuj22a}). Let $X_c$
be the canonical modification of the unique component of $\Gamma \times_Z Z_c$
that dominates $Z_c$. By Lemma~\ref{lemma-q-factorialisation} we can assume after 
a small modification that $X_c$ is strongly $\Q$-factorial. Thus we can apply 
Corollary~\ref{cor:R1=R2=0} to obtain that
$$
f_c\colon X_c\To Z_c
$$
is a projective morphism.
\end{remark}


\end{document}